\documentclass[11pt]{article}
\usepackage{fullpage,amsmath,amsthm,amsfonts,amssymb}
\usepackage{url}
%

%
\overfullrule=0pt
%
%
%
%

\newcommand{\Erdos}{Erd\H{o}s }
\newcommand{\Erdosns}{Erd\H{o}s}
\newcommand{\alphap}{{\alpha'}}
\newcommand{\betap}{{\beta'}}


%
%

\newcommand{\ang}[1]{\langle#1\rangle}

\newcommand{\Z}{{\sf Z}}
\newcommand{\nat}{{\sf N}}
\newcommand{\rat}{{\sf Q}}
\newcommand{\real}{{\sf R}}

\newcommand{\support}{\textup{supp}}
\newcommand{\sg}{\textup{sig}}

%
%
\newcommand{\xvec}[1]{\ifcase 3{#1} {\ang {a_1,a_2,a_3} } \else 
\ifcase 4{#1} {\ang{a_1,a_2,a_3,a_4}} \else {\ang {a_1,\ldots,a_{#1}}}\fi\fi}
\newcommand{\yvec}[1]{\ifcase 3{#1} {\ang {y_1,y_2,y_3} } \else 
\ifcase 4{#1} {\ang{y_1,y_2,y_3,y_4}} \else {\ang {y_1,\ldots,y_{#1}}}\fi\fi}
\newcommand{\zvec}[1]{\ifcase 3{#1} {\ang {z_1,z_2,z_3} } \else 
\ifcase 4{#1} {\ang{z_1,z_2,z_3,z_4}} \else {\ang {z_1,\ldots,z_{#1}}}\fi\fi}
\newcommand{\vecc}[2]{\ifcase 3{#2} {\ang { {#1}_1,{#1}_2,{#1}_3 } } \else
\ifcase 4{#1} {\ang { {#1}_1,{#1}_2,{#1}_3,{#1}_{4} } }
\else {\ang { {#1}_1,\ldots,{#1}_{#2}}}\fi\fi}
\newcommand{\veccd}[3]{\ifcase 3{#2} {\ang { {#1}_{{#3}1},{#1}_{{#3}2},{#1}_{{#3}3} } } \else
\ifcase 4{#1} {\ang { {#1}_{{#3}1},{#1}_{{#3}2},{#1}_{#3}3},{#1}_{{#3}4} }
\else {\ang { {#1}_{{#3}1},\ldots,{#1}_{{#3}{#2}}}}\fi\fi}
%

%
%
%
\newcommand{\veccz}[2]{\ifcase 3{#2} {\ang { {#1}_0,{#1}_2,{#1}_3 } } \else
\ifcase 4{#1} {\ang { {#1}_0,{#1}_2,{#1}_3,{#1}_{4} } }
\else {\ang { {#1}_0,\ldots,{#1}_{#2}}}\fi\fi}
%

%
%
\newcommand{\xve}[1]{\ifcase 3{#1} {a_1,a_2,a_3} \else 
\ifcase 4{#1} {a_1,a_2,a_3,a_4} \else {a_1,\ldots,a_{#1}}\fi\fi}
\newcommand{\yve}[1]{\ifcase 3{#1} {y_1,y_2,y_3} \else 
\ifcase 4{#1} {y_1,y_2,y_3,y_4} \else {y_1,\ldots,y_{#1}}\fi\fi}
\newcommand{\zve}[1]{\ifcase 3{#1} {z_1,z_2,z_3} \else 
\ifcase 4{#1} {z_1,z_2,z_3,z_4} \else {z_1,\ldots,z_{#1}}\fi\fi}
\newcommand{\ve}[2]{\ifcase 3#2 {{#1}_1,{#1}_2,{#1}_3} \else
\ifcase 4#2 {{#1}_1,{#1}_2,{#1}_3,{#1}_{4}}
\else {{#1}_1,\ldots,{#1}_{#2}}\fi\fi}
\newcommand{\ved}[3]{\ifcase 3#2 {{#1}_{{#3}1},{#1}_{{#3}2},{#1}_{{#3}3}} \else
\ifcase 4#2 {{#1}_{{#3}1},{#1}_{{#3}2},{#1}_{{#3}3},{#1}_{{#3}4}}
\else {{#1}_{{#3}1},\ldots,{#1}_{{#3}{#2}}}\fi\fi}
\newcommand{\fuve}[3]{
\ifcase 3#2
{{#3}({#1}_1),{#3}({#1}_2,{#3}({#1}_3)} \else
\ifcase 4#2
{{#3}({#1}_1),{#3}({#1}_2),{#3}({#1}_3),{#3}({#1}_4)}
\else
{{#3}({#1}_1),\ldots,{#3}({#1}_{#2})}\fi\fi}
%

%
%


%
%

\newcommand{\setmathchar}[1]{\ifmmode#1\else$#1$\fi}
\newcommand{\vlist}[2]{%
	\setmathchar{%
		\compound#2\one{#2}\two
		\ifcompound
			({#1}_1,\ldots,{#1}_{#2})
		\else
			\ifcat N#2
				({#1}_1,\ldots,{#1}_{#2})
			\else
				\ifcase#2
					({#1}_0)\or
					({#1}_1)\or
					({#1}_1,{#1}_2)\or 
					({#1}_1,{#1}_2,{#1}_3)\or
					({#1}_1,{#1}_2,{#1}_3,{#1}_4)\else 
					({#1}_1,\ldots,{#1}_{#2})
				\fi
			\fi
		\fi}}

\newif\ifcompound
\def\compound#1\one#2\two{%
	\def\one{#1}
	\def\two{#2}
	\if\one\two
		\compoundfalse
	\else
		\compoundtrue
	\fi}

%
%
\newcommand{\xwe}[1]{\ifcase 3{#1} {a_1\wedge a_2\wedge a_3} \else 
\ifcase 4{#1} {a_1\wedge a_2\wedge a_3\wedge a_4} \else {a_1\wedge \cdots \wedge
a_{#1}}\fi\fi}
\newcommand{\we}[2]{\ifcase 3#2 {\ang { {#1}_1\wedge {#1}_2\wedge {#1}_3 } } \else
\ifcase 4{#1} {\ang { {#1}_1\wedge {#1}_2\wedge {#1}_3\wedge {#1}_{4} } }
\else {\ang { {#1}_1\wedge \cdots\wedge {#1}_{#2}}}\fi\fi}

\newcommand{\st}{\mathrel{:}}

\newcommand{\into}{\rightarrow}

\newcommand{\es}{\emptyset}

\newcommand{\CL}{\mathord{\mbox{\it CL}}}
\newcommand{\COL}{\mathord{\mbox{\it COL}}}

%
%

\newcommand{\union}{\cup}

\newcommand{\s}[1]{\s_{#1}}

\newcommand{\monus}{\;\raise.5ex\hbox{{${\buildrel
    \ldotp\over{\hbox to 6pt{\hrulefill}}}$}}\;}

%
%
%
%
%
%
\newcounter{savenumi}

\newtheorem{theoremfoo}{Theorem}[section] 
\newenvironment{theorem}{\pagebreak[1]\begin{theoremfoo}}{\end{theoremfoo}}

\newtheorem{lemmafoo}[theoremfoo]{Lemma}
\newenvironment{lemma}{\pagebreak[1]\begin{lemmafoo}}{\end{lemmafoo}}
\newtheorem{conjecturefoo}[theoremfoo]{Conjecture}

\newtheorem{conventionfoo}[theoremfoo]{Convention}
\newenvironment{convention}{\pagebreak[1]\begin{conventionfoo}\rm}{\end{conventionfoo}}

\newtheorem{corollaryfoo}[theoremfoo]{Corollary}
\newenvironment{corollary}{\pagebreak[1]\begin{corollaryfoo}}{\end{corollaryfoo}}

\newtheorem{exercisefoo}[theoremfoo]{Exercise}

\newtheorem{openfoo}[theoremfoo]{Open Problem}

\newcommand{\fig}[1] 
{
 \begin{figure}
 \begin{center}
 \input{#1}
 \end{center}
 \end{figure}
}

\newtheorem{potanafoo}[theoremfoo]{Potential Analogue}

\newtheorem{notefoo}[theoremfoo]{Note}
\newenvironment{note}{\pagebreak[1]\begin{notefoo}\rm}{\end{notefoo}}

\newtheorem{nttn}[theoremfoo]{Notation}
\newenvironment{notation}{\pagebreak[1]\begin{nttn}\rm}{\end{nttn}}

\newtheorem{examfoo}[theoremfoo]{Example}

\newtheorem{dfntn}[theoremfoo]{Definition}
\newenvironment{definition}{\pagebreak[1]\begin{dfntn}\rm}{\end{dfntn}}

\newtheorem{propositionfoo}[theoremfoo]{Proposition}

\newenvironment{sketch}{\begin{proof}[Proof sketch]}{\end{proof}}

\newcommand{\yyskip}{\penalty-50\vskip 5pt plus 3pt minus 2pt}
\newcommand{\blackslug}{\hbox{\hskip 1pt
        \vrule width 4pt height 8pt depth 1.5pt\hskip 1pt}}
\newcommand{\QED}{{\penalty10000\parindent 0pt\penalty10000
        \hskip 8 pt\nolinebreak\blackslug\hfill\lower 8.5pt\null}
        \par\yyskip\pagebreak[1]}

\newtheorem{factfoo}[theoremfoo]{Fact}
\newenvironment{fact}{\pagebreak[1]\begin{factfoo}}{\end{factfoo}}




\newenvironment{block}{\begin{list}{\hbox{}}{\leftmargin 1em
    \itemindent -1em \topsep 0pt \itemsep 0pt \partopsep 0pt}}{\end{list}}


\dimen15=0.75em
\dimen16=0.75em

\topmargin=-.32in		
\headsep=.4in			
\textheight=8.5in		
\textwidth=6in			
\oddsidemargin=.15in		
\evensidemargin=.25in

\begin{document}

\centerline{\bf A Statement in Combinatorics that is}
\centerline{\bf Independent of ZFC (An Exposition)}
\centerline{\bf by Stephen Fenner\footnote{fenner@cse.sc.edu} and William Gasarch\footnote{gasarch@cs.umd.edu}}

\begin{abstract}
It is known that, for any finite coloring of $\nat$, there exists
distinct naturals $e_1,e_2,e_3,e_4$ that are the same color such that
$e_1+e_2=e_3+e_4$. Consider the following statement which we denote $S$:
{\it For every $\aleph_0$-coloring of the reals 
there exists distinct reals $e_1,e_2,e_3,e_4$ such that
$e_1+e_2=e_3+e_4$?} Is it true? \Erdos showed that $S$ is equivalent
to the negation of the Continuum Hypothesis, and hence $S$ is indepedent
of ZFC. We give an exposition of his proof and some modern observations
about results of this sort.
\end{abstract}

\section{Introduction}

There are some statements that are independent of
Zermelo-Frankl Set Theory (henceforth ZFC).
Such statements cannot be proven or disproven
by conventional mathematics.  
The Continuum Hypothesis is one such statement
(``There is no cardinality strictly between $\aleph_0$ and $2^{\aleph_0}$.'')
Many such statements are unnatural in that they deal with objects only set theorists
and other logicians care about.

We present a natural statement in combinatorics that is
independent of ZFC.  
The result is due to \Erdosns.
In the last section we will discuss the question of whether the statement
is really natural.

\begin{notation}
We use $\nat$ to denote $\{0,1,2,\ldots\}$.
We use $\nat^+$ to denote $\{1,2,3,\ldots\}$.
If $n\in \nat^+$ then $[n]$ is the set $\{1,2,\ldots,n\}$.
We use $\real$  to denote the sets of real numbers.
We use $\Z$ to denote the integers.
We use $k$-AP to refer to an arithmetic progression with $k$ distinct elements.
For a set $A$ and $k\in\nat$, we let $\binom{A}{k}$ denote the set of $k$-element subsets of $A$.
\end{notation}

\begin{convention}
A set $A$ is {\it countable} if it is finite or there is a bijection of $A$
to $\nat$.
\end{convention}

\section{Colorings and Equations}

\begin{definition}
A {\it finite coloring} of a set $S$ is a map from $S$ to a finite set.
An {\it $\aleph_0$-coloring} of a set $S$ is a map from $S$ to a countable set.
\end{definition}

The following theorem is well known. We prove it for the sake of completeness.

\begin{theorem}\label{th:four}
For any finite coloring of $\nat^+$,
there exists distinct monochromatic $e_1,e_2,e_3,e_4$ such that
$$e_1 + e_2 = e_3 +e_4.$$
\end{theorem}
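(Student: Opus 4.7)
My plan is to prove this by a clean two-step pigeonhole argument. Suppose the coloring uses $k$ colors. I will first find a large monochromatic subset by pigeonhole on the coloring, and then find two distinct pairs within that subset with equal sums by a second pigeonhole on the sums.

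More concretely, I will fix $N$ large enough (something like $N \geq 4k^2$ will suffice) and consider the restriction of the coloring to $[N]$. Since $[N]$ is partitioned into at most $k$ color classes, some class contains a monochromatic subset $M \subseteq [N]$ of size at least $m := \lceil N/k \rceil$. The set of unordered pairs $\binom{M}{2}$ has cardinality $\binom{m}{2}$, and for every such pair $\{a,b\}$ the sum $a+b$ lies in $\{3,4,\ldots,2N-1\}$, a set of size $2N-3$. Choosing $N$ so that $\binom{m}{2} > 2N-3$ guarantees, by pigeonhole on sums, two distinct pairs $\{e_1,e_2\} \neq \{e_3,e_4\}$ in $M$ with $e_1+e_2 = e_3+e_4$.

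The small point to verify is that these four elements are in fact distinct. If $\{e_1,e_2\}$ and $\{e_3,e_4\}$ shared an element, equality of sums would force the other two to be equal as well, contradicting that the pairs are distinct. So $e_1,e_2,e_3,e_4$ are four distinct elements, all of the same color, satisfying $e_1+e_2 = e_3+e_4$.

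I do not expect any real obstacle; the only thing to be careful about is choosing $N$ large enough as a function of $k$ so that both pigeonhole steps go through simultaneously. A convenient choice is any $N$ with $N/k \geq \sqrt{4N}$, for instance $N = 4k^2$, which gives $\binom{m}{2} \geq \binom{4k}{2} = 2k(4k-1) > 2N$ comfortably.
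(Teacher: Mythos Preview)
Your two-step pigeonhole strategy is sound and will prove the theorem, but the specific constant you chose does not quite work. With $N = 4k^2$ you get $m = 4k$ exactly, so
\[
\binom{m}{2} = \binom{4k}{2} = 2k(4k-1) = 8k^2 - 2k,
\]
while $2N - 3 = 8k^2 - 3$; thus $\binom{m}{2} > 2N-3$ would require $2k < 3$, failing for every $k \ge 2$. Your claim ``$2k(4k-1) > 2N$ comfortably'' is simply an arithmetic slip: $8k^2 - 2k < 8k^2$. The fix is trivial: take $N$ a bit larger, e.g., $N = 5k^2$ gives $m \ge 5k$ and $\binom{5k}{2} = (25k^2 - 5k)/2 > 10k^2 - 3 = 2N-3$ for all $k \ge 1$. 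With that correction your argument goes through, including the observation that two distinct unordered pairs with equal sums must be disjoint.

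This route is genuinely different from the paper's. The paper gives two proofs, one via Ramsey's theorem on pairs (coloring $\{a,b\}$ by $\COL(|a-b|)$ and reading off differences in an infinite homogeneous set) and one via van der Waerden's theorem (extracting a monochromatic $4$-AP). Both invoke substantial Ramsey-type machinery. Your argument is more elementary: it uses nothing beyond two applications of the pigeonhole principle and gives an explicit finite bound on how far into $\nat^+$ one must look. The paper's proofs, on the other hand, yield stronger structural information (an infinite homogeneous set, or arbitrarily long monochromatic APs) that your argument does not, and they connect the result to the broader Ramsey-theoretic context the paper cares about.
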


\begin{proof}
Let $\COL$ be a finite coloring of $\nat^+$.
Let $[c]$ be the image of $\COL$.

\noindent
{\bf First Proof}

Recall Ramsey's theorem~\cite{ramseynotes,GRS,Ramsey} 
on $\nat$: for any finite coloring of {\it unordered pairs of naturals}
there exists an infinite set $A$ such that all pairs of elements from $A$ have
the same color.

Let $\COL^*: \binom{\nat}{2} \into  [c]$ be defined by $\COL^*(\{a,b\}) = \COL(|a-b|)$.
Let $A$ be the set that exists by Ramsey's theorem.
Let $a_1<a_2<a_3<a_4\in A$.
Since $A$ is infinite we can take $a_1,a_2,a_3,a_4$ such that the six numbers
$\left\{a_j-a_i \st \{i,j\} \in \binom{[4]}{2}\right\}$ are distinct.

Since all of the $\COL^*(\{a_i,a_j\})$ are the same color
we have that, for $i<j$, $\COL(a_j-a_i)$ are all the same color.
Let
\begin{align*}
e_1 &= a_2-a_1 \\
e_2 &= a_4-a_2 \\
e_3 &= a_3-a_1 \\
e_4 &= a_4-a_3
\end{align*}

Clearly $e_1,e_2,e_3,e_4$ are distinct,
$\COL(e_1)=\COL(e_2)=\COL(e_3)=\COL(e_4)$, and $e_1+e_2=e_3+e_4$.

\noindent
{\bf Second Proof}

Recall van der Waerden's theorem~\cite{VDWbook,GRS,RamseyInts,VDW}:
For all $k$, for any finite coloring of $\nat^+$, there exists a monochromatic $k$-AP, that is, a $k$-AP all of whose elements are the same color.

Apply van der Waerden's Theorem to $\COL$ with $k=4$.
There exists $a,d\in \nat^+$ such that
$a$, $a+d$, $a+2d$, $a+3d$ are the same color.
Let
\begin{align*}
e_1 &= a \\
e_2 &= a +4d \\
e_3 &= a +2d \\
e_4 &= a +3d
\end{align*}
\end{proof}

\begin{note}~
Rado's theorem characterizes which equations lead to theorems like Theorem~\ref{th:four}
and which ones do not. We will discuss Rado's theorem in Section~\ref{se:moref}.
\end{note}

\section{What If We Color the Reals?}

What if we finitely color the Reals?
Theorem~\ref{th:four}
will still hold since we can just
restrict the coloring to $\nat^+$.
What if we $\aleph_0$-color the reals?

Let $S$ be the following statement:

\bigskip

{\it For any $\aleph_0$-coloring of the reals, there exist distinct monochromatic $e_1,e_2,e_3,e_4$ such that}
$$e_1 + e_2 = e_3 +e_4.$$

\bigskip

Is $S$ true?
This is the wrong question.
It turns out that $S$  is equivalent to the negation of CH, and hence
is independent of ZFC.
Komj\'{a}th~\cite{komjath} claims that \Erdos proved this result.
The proof we give is due to Davies~\cite{radoch}.
The goal of our paper is to present and popularize this result.
For more references on these types of results see Vega's paper~\cite{vega}.

\begin{definition}
$\omega$ is the first infinite ordinal, namely $\{1<2<3<\cdots \}$.
(Formally it is any ordering that is equivalent to $\{1<2<3<\cdots \}$.)
$\omega_1$ is the first uncountable ordinal.
$\omega_2$ is the first ordinal with cardinality bigger than $\omega_1$.
\end{definition}

\begin{fact}\label{fa:omega}\ 
\begin{enumerate}
\item
Assume CH.
Then there is a bijection between $\real$ and $\omega_1$.  
For all $\alpha\in\omega_1$ let $\alpha$ map to $x_{\alpha}$.
We can picture the reals listed out as such:
$$x_0,x_1,x_2, \ldots, x_\alpha, \ldots.$$
Note that, for all $\alpha\in \omega_1$, 
the set $\{ x_\beta \mid \beta <\alpha \}$ is countable.
\item
Assume $\neg$CH. Then there is an injection from $\omega_2$ to $\real$.
\end{enumerate}
\end{fact}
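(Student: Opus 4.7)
The plan is to unpack the definitions and invoke standard cardinal arithmetic in ZFC. Both parts rest on the facts that $|\omega_1| = \aleph_1$ (since $\omega_1$ is the least uncountable ordinal) and $|\omega_2| = \aleph_2$ (since $\omega_2$ is the least ordinal of cardinality exceeding $\aleph_1$), together with the well-ordering of cardinals available in ZFC.

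For part~(1), I first observe that CH is the assertion $|\real| = \aleph_1 = |\omega_1|$, so a bijection $\alpha \mapsto x_\alpha$ from $\omega_1$ onto $\real$ exists immediately by the definition of cardinal equality. For the second claim, I argue that for any $\alpha \in \omega_1$, the ordinal $\alpha$ itself lies strictly below the least uncountable ordinal, so $\alpha$ is a countable ordinal. The set of predecessors $\{\beta : \beta < \alpha\}$ is in bijection with $\alpha$ and is therefore countable, and transferring through the bijection $\beta \mapsto x_\beta$ shows that $\{x_\beta : \beta < \alpha\}$ is countable.

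For part~(2), I apply Cantor's theorem to conclude $|\real| = 2^{\aleph_0} > \aleph_0$, which in ZFC forces $|\real| \geq \aleph_1$ since $\aleph_1$ is by definition the least uncountable cardinal. The hypothesis $\neg\mathrm{CH}$ then rules out equality, yielding $|\real| > \aleph_1$, and because $\aleph_2$ is the immediate cardinal successor of $\aleph_1$ we obtain $|\real| \geq \aleph_2 = |\omega_2|$. By the definition of cardinal inequality, this provides an injection from $\omega_2$ into $\real$.

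There is no real obstacle to overcome here; the fact is essentially a translation between the $\aleph$-hierarchy of cardinals and the $\omega_\xi$-hierarchy of initial ordinals. The only point requiring care is to be explicit about which form of trichotomy, and which piece of the cardinal/ordinal dictionary, is being invoked at each step. The substantive use of this fact will come in the subsequent construction of a bad $\aleph_0$-coloring of $\real$ under CH and in the proof that $S$ holds under $\neg$CH.
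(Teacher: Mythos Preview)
Your argument is correct. The paper itself offers no proof of this statement: it is labelled a \emph{Fact} and simply asserted as standard set-theoretic background, to be used later in Theorems~\ref{th:false} and~\ref{th:true}. Your write-up supplies exactly the routine justification one would expect---unpacking CH as $|\real|=\aleph_1=|\omega_1|$, noting that every $\alpha\in\omega_1$ is a countable ordinal so its set of predecessors (and hence the image $\{x_\beta:\beta<\alpha\}$) is countable, and for part~(2) using $\neg$CH together with the well-ordering of cardinals in ZFC to get $|\real|\ge\aleph_2$ and hence an injection $\omega_2\hookrightarrow\real$. There is nothing to compare against; you have simply filled in what the authors left implicit.
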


\section{CH $\implies \neg S$ }

\begin{definition}
Let $X\subseteq \real$.
Then $\CL(X)$ is the smallest set $Y\supseteq X$ that is closed under
addition, subtraction, multiplication, and division.
\end{definition}

\begin{lemma}\label{le:cl}~
\begin{enumerate}
\item
If $X$ is countable then $\CL(X)$ is countable.
\item
If $X_1 \subseteq X_2$ then $\CL(X_1)\subseteq \CL(X_2)$.
\end{enumerate}
\end{lemma}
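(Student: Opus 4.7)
The plan is to prove both parts by exhibiting $\CL(X)$ as an explicit iterative construction and then invoking minimality.

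For part 1, I would define a sequence $X_0 \subseteq X_1 \subseteq X_2 \subseteq \cdots$ of subsets of $\real$ by setting $X_0 = X$ and
\[
X_{n+1} = X_n \cup \{a+b,\ a-b,\ ab : a,b \in X_n\} \cup \{a/b : a,b \in X_n,\ b \neq 0\}.
\]
The key claim is that $\CL(X) = \bigcup_{n \in \omega} X_n$. To verify this, I would first show by induction on $n$ that each $X_n$ is countable: if $X_n$ is countable, then the set of ordered pairs $X_n \times X_n$ is countable, and $X_{n+1}$ is obtained from $X_n$ by adjoining at most four new elements per pair, so $X_{n+1}$ is a countable union of countable sets, hence countable. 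The union $\bigcup_n X_n$ is then a countable union of countable sets, hence countable. Next I would check closure: if $a, b \in \bigcup_n X_n$, pick $k$ with $a, b \in X_k$, so $a+b, a-b, ab, a/b \in X_{k+1} \subseteq \bigcup_n X_n$. Finally I would verify minimality: any $Y \supseteq X$ closed under the four operations contains $X_0$ trivially, and if $Y \supseteq X_n$ then closure forces $Y \supseteq X_{n+1}$, so by induction $Y \supseteq \bigcup_n X_n$. This pins down $\bigcup_n X_n$ as $\CL(X)$ and shows it is countable.

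For part 2, the argument is immediate once $\CL$ is understood as the smallest closed superset: the set $\CL(X_2)$ is closed under the four operations and contains $X_1$ (since $X_1 \subseteq X_2 \subseteq \CL(X_2)$), so by the minimality in the definition of $\CL(X_1)$ we have $\CL(X_1) \subseteq \CL(X_2)$.

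There is no real obstacle here; the only subtlety is the mild use of countable choice in asserting that a countable union of countable sets is countable, but this is entirely within ZFC. One could alternatively avoid iteration and define $\CL(X)$ directly as the intersection of all subfields of $\real$ containing $X$ (adjoining $0$ and $1$ if needed to make this nonempty), but the iterative presentation is more convenient for the cardinality bookkeeping that makes part 1 transparent.
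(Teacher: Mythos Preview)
Your proof is correct and follows essentially the same approach as the paper: the paper also builds $\CL(X)$ as the union of an $\omega$-chain $C_0 = X$, $C_{n+1} = C_n \cup \{a+b,a-b,ab\} \cup \{a/b\}$, asserting that this union equals $\CL(X)$ and is countable, and dismisses part~2 as an easy exercise. You have simply filled in the details the paper leaves to the reader (closure, minimality, and the inductive cardinality count), so there is nothing substantively different to compare.
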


\begin{proof}

\noindent
1) Assume $X$ is countable.
$\CL(X)$ can be defined with an $\omega$-induction
(that is, an induction just through $\omega$).
\begin{align*}
C_0 &= X \\
C_{n+1} &= C_n\union\{ a+b,a-b,ab\mid a,b \in C_n \} \union \{ a/b \mid a,b \in C_n, b\ne 0 \}\\
\end{align*}
One can easily show that
$\CL(X)=\union_{i=0}^\infty C_i$ and that this set is countable.

\noindent
2) This is an easy exercise.
\end{proof}

\begin{theorem}\label{th:false}
Assume CH. There exists an
$\aleph_0$-coloring of $\real$ such that there are no distinct monochromatic $e_1,e_2,e_3,e_4$ such that
$$e_1+e_2=e_3+e_4.$$
\end{theorem}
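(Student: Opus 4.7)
The plan is to use CH to enumerate $\real = \{x_\alpha : \alpha < \omega_1\}$ and then build a coloring by transfinite recursion that re-uses the countable palette $\nat$. First I would set, for each $\alpha < \omega_1$,
$$Q_\alpha = \CL(\{x_\beta : \beta < \alpha\}),$$
which is countable by Lemma~\ref{le:cl}. The chain $(Q_\alpha)$ is monotone, and its union is all of $\real$ since $x_\alpha \in Q_{\alpha+1}$. At limit $\alpha$, $Q_\alpha = \bigcup_{\beta<\alpha} Q_\beta$, since each member of $Q_\alpha$ is generated from only finitely many $x_\beta$'s.

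Next I would partition $\real$ into the ``levels'' $B_\alpha = Q_\alpha \setminus \bigcup_{\beta<\alpha} Q_\beta$. Each $B_\alpha$ is countable (as a subset of $Q_\alpha$), and every real $r$ lies in a unique level, which I will call $\rho(r)$; equivalently, $\rho(r)$ is the least $\alpha$ with $r \in Q_\alpha$. Choosing an enumeration $B_\alpha = \{b_\alpha^0, b_\alpha^1, \ldots\}$ for each non-empty level, I would define
$$\chi : \real \to \nat, \qquad \chi(b_{\rho(r)}^n) = n.$$

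To verify no monochromatic solution exists, suppose for contradiction that $e_1, e_2, e_3, e_4$ are distinct, monochromatic, and satisfy $e_1 + e_2 = e_3 + e_4$. Let $\alpha^\ast = \max_i \rho(e_i)$. If two of the $e_i$ share the level $\alpha^\ast$, then sharing a color forces them to share an index in the enumeration of $B_{\alpha^\ast}$, contradicting distinctness. Hence exactly one $e_i$---say $e_1$---attains $\rho(e_1) = \alpha^\ast$, and the remaining three lie in $Q_{\alpha'}$ for $\alpha' = \max\{\rho(e_2), \rho(e_3), \rho(e_4)\} < \alpha^\ast$. By closure of $Q_{\alpha'}$ under addition and subtraction, $e_1 = e_3 + e_4 - e_2 \in Q_{\alpha'}$, contradicting the minimality in the definition of $\rho(e_1) = \alpha^\ast$.

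The main obstacle is the case analysis on where the maximum $\alpha^\ast$ is attained: ties are killed by the injectivity of each per-level enumeration, while a unique maximum is killed by the closure properties of $Q_{\alpha'}$. The per-level enumeration trick is essential---a naive ``fresh color for each $x_\alpha$'' recursion would require $\omega_1$ colors, whereas recycling $\nat$ across levels keeps the palette countable while still separating the top-level witness from the lower-level ones.
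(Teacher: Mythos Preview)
Your argument is correct and is essentially identical to the paper's: your $Q_\alpha$, $B_\alpha$, and per-level injective coloring are exactly the paper's $Y_\alpha$, $Z_\alpha$, and coloring, and your contradiction via closure is the same (the paper phrases it as ``all four $\alpha_i$ are distinct'' rather than ``the maximum is attained uniquely,'' but either formulation suffices). The limit-continuity remark $Q_\alpha = \bigcup_{\beta<\alpha} Q_\beta$ is true but not needed, since well-ordering alone gives a least $\alpha$ with $r\in Q_\alpha$.
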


\begin{proof}

Since we are assuming CH, we have, by Fact~\ref{fa:omega}.1,
a bijection between $\real$ and $\omega_1$. For each $\alpha\in\omega_1$ let $x_\alpha$
be the real that $\alpha$ maps to.


For $\alpha < \omega_1$ let
$$X_\alpha = \{ x_\beta \mid \beta < \alpha \}.$$

Note the following:
\begin{enumerate}
\item
For all $\alpha$, $X_\alpha$ is countable.
\item
$
X_0\subset X_1 \subset X_2 \subset X_3 \subset \cdots \subset X_\alpha \subset\cdots
$
\item
$\bigcup_{\alpha<\omega_1} X_\alpha =  \real.$
\end{enumerate}

We define another increasing sequence of sets $Y_\alpha$ by letting
$$Y_\alpha = \CL(X_\alpha).$$

Note the following:
\begin{enumerate}
\item
For all $\alpha$, $Y_\alpha$ is countable.  This is from Lemma~\ref{le:cl}.1.
\item
$
Y_0\subseteq Y_1 \subseteq Y_2 \subseteq Y_3 \subseteq \cdots \subseteq Y_\alpha \subseteq\cdots\,$.
This is from Lemma~\ref{le:cl}.2.
\item
$\bigcup_{\alpha<\omega_1} Y_\alpha =  \real.$
\end{enumerate}

We now define our last sequence of sets:

For all $\alpha<\omega_1$,
$$Z_\alpha = Y_\alpha - \left(\bigcup_{\beta<\alpha} Y_\beta\right).$$

Note the following:
\begin{enumerate}
\item
Each $Z_\alpha$ is countable.
\item
The $Z_\alpha$ form a partition of $\real$ (although some of the $Z_\alpha$ may be empty).
\end{enumerate}

We will now define an $\aleph_0$-coloring of $\real$:
For each $\alpha<\omega_1$ we color $Z_\alpha$ with colors in $\omega$
making sure that every element of $Z_\alpha$ has a different color
(this is possible since $Z_\alpha$ is at most countable).

Assume, by way of contradiction, that there are distinct monochromatic $e_1,e_2,e_3,e_4$ such that
$$e_1+e_2=e_3+e_4.$$

Let $\alpha_1,\alpha_2,\alpha_3,\alpha_4\in\omega_1$ be such that
$e_i \in Z_{\alpha_i}$.
Since all of the elements in any $Z_\alpha$ are colored differently,
all of the $\alpha_i$'s are different.
We will assume $\alpha_1<\alpha_2<\alpha_3<\alpha_4$.
The other cases are similar.
Note that
$$e_4=e_1+e_2-e_3.$$
and
$$e_1,e_2,e_3 \in Z_{\alpha_1} \cup Z_{\alpha_2} \cup Z_{\alpha_3}\subseteq Y_{\alpha_1} \cup Y_{\alpha_2} \cup Y_{\alpha_3} = Y_{\alpha_3}.$$

Since $Y_{\alpha_3}=\CL(X_{\alpha_3})$
and $e_1,e_2,e_3\in Y_{\alpha_3}$, we have
$e_4 \in Y_{\alpha_3}$.
Hence $e_4\notin Z_{\alpha_4}$. This is a contradiction.
\end{proof}

What was it about the equation
$$e_1+e_2 = e_3 +e_4$$
that made the proof of Theorem~\ref{th:false} work?
Absolutely nothing:

\begin{theorem}\label{th:gen}
Let $n\ge 2$.
Let $a_1,\ldots,a_n\in \real$ be nonzero.
Assume CH. There exists an
$\aleph_0$-coloring of $\real$ such that there are no distinct monochromatic
$e_1,\ldots,e_n$ such that
$$\sum_{i=1}^n a_ie_i = 0.$$
\end{theorem}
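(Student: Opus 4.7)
The plan is to mirror the proof of Theorem~\ref{th:false} almost verbatim, changing only the base set inside the closure so as to accommodate the coefficients $a_1,\ldots,a_n$. Under CH, fix a bijection $\alpha\mapsto x_\alpha$ from $\omega_1$ onto $\real$ (Fact~\ref{fa:omega}.1) and set $X_\alpha=\{x_\beta\mid \beta<\alpha\}$ as before. Now enlarge the definition of $Y_\alpha$ so that it contains all the coefficients:
$$Y_\alpha=\CL\left(X_\alpha\cup\{a_1,\ldots,a_n\}\right).$$
Since $X_\alpha\cup\{a_1,\ldots,a_n\}$ is countable, Lemma~\ref{le:cl} still yields that each $Y_\alpha$ is countable; the chain $Y_0\subseteq Y_1\subseteq\cdots$ and the identity $\bigcup_{\alpha<\omega_1} Y_\alpha=\real$ are unchanged. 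Define $Z_\alpha=Y_\alpha\setminus\bigcup_{\beta<\alpha} Y_\beta$ exactly as before, so that the $Z_\alpha$ partition $\real$ into countable pieces, and color each $Z_\alpha$ so that distinct elements receive distinct colors. This gives an $\aleph_0$-coloring of $\real$.

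For the contradiction, suppose we had distinct monochromatic $e_1,\ldots,e_n$ with $\sum_{i=1}^n a_i e_i=0$. Writing $e_i\in Z_{\alpha_i}$, monochromaticity forces the $\alpha_i$ to be pairwise distinct. Simultaneously permuting the pairs $(a_i,e_i)$, we may relabel so that $\alpha_1<\alpha_2<\cdots<\alpha_n$; the equation keeps the same form with a new nonzero tuple of coefficients drawn from the original set $\{a_1,\ldots,a_n\}$. Then $e_1,\ldots,e_{n-1}\in Y_{\alpha_{n-1}}$, and since $a_n\ne 0$ we may solve
$$e_n=-\frac{1}{a_n}\sum_{i=1}^{n-1} a_i e_i.$$
Because every $a_i$ lies in $Y_{\alpha_{n-1}}$ by construction, and $Y_{\alpha_{n-1}}$ is closed under addition, subtraction, multiplication, and division by nonzero elements, this forces $e_n\in Y_{\alpha_{n-1}}\subseteq\bigcup_{\beta<\alpha_n} Y_\beta$, contradicting $e_n\in Z_{\alpha_n}$.

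There is no serious obstacle here: the proof is essentially identical to Theorem~\ref{th:false}. The one substantive difference is that the coefficients themselves must be available inside the countable closure used to trap $e_n$, which we achieve uniformly in $\alpha$ by folding $\{a_1,\ldots,a_n\}$ into the base of $\CL$. The closure of $Y_\alpha$ under multiplication and division --- a feature of $\CL$ that was unused for the special equation $e_1+e_2=e_3+e_4$ --- is precisely what lets the same argument absorb arbitrary nonzero real coefficients, and the hypothesis $a_n\ne 0$ is invoked exactly once, to justify the division in solving for $e_n$.
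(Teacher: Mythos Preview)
Your proof is correct and follows essentially the same approach as the paper's sketch: build the same tower $X_\alpha\subseteq Y_\alpha$, partition into the $Z_\alpha$, color each $Z_\alpha$ injectively, and derive a contradiction by solving for the variable with the largest index. The one place you deviate is in setting $Y_\alpha=\CL(X_\alpha\cup\{a_1,\ldots,a_n\})$ rather than $\CL(X_\alpha)$; this is a genuine improvement, since the paper's sketch tacitly assumes the coefficients $a_i$ lie in $Y_{\alpha_{n-1}}$ without arranging for it, whereas your modification makes the membership $e_n\in Y_{\alpha_{n-1}}$ airtight at no cost.
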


\begin{sketch}
Since this proof is similar to the last one we just sketch it.

Let $X_\alpha$, $Y_\alpha$,  $Z_\alpha$ be defined as in Theorem~\ref{th:false}.
With these definitions define an $\aleph_0$-coloring like the one in 
the proof of Theorem~\ref{th:false}.

Assume, by way of contradiction, that there are distinct monochromatic $e_1,\ldots,e_n$ such that
$$\sum_{i=1}^n a_ie_i = 0.$$

Let $\alpha_1,\ldots ,\alpha_n$ be such that
$e_i \in Z_{\alpha_i}$.
Since all of the elements in any $Z_\alpha$ are colored differently,
all of the $\alpha_i$'s are different.
We will assume $\alpha_1<\alpha_2<\cdots <\alpha_n$.
The other cases are similar.
Note that
$$e_n = -(1/a_n)\sum_{i=1}^{n-1} a_i e_i \in \CL(X)$$
and
$$e_1,\ldots,e_{n-1} \in Z_{\alpha_1}\cup \cdots \cup Z_{\alpha_{n-1}}\subseteq Y_{\alpha_{n-1}}.$$

Since $Y_{\alpha_{n-1}}=\CL(X_{\alpha_{n-1}})$
and $e_1,\ldots,e_{n-1}\in Y_{\alpha_{n-1}}$, we have
$e_n \in Y_{\alpha_{n-1}}$.
Hence $e_n\notin Z_{\alpha_n}$.  This is a contradiction.
\end{sketch}

\begin{note}
The converse to Theorem~\ref{th:gen} is not true.
The $s=2$ case of Theorem~\ref{th:fox} (in Section~\ref{se:moref}) 
states that
every $\aleph_0$-coloring of $\nat$ has a distinct monochromatic solution to
$x_1+2x_2=x_3+x_4+x_5$ is $\aleph_0$ iff $2^{\aleph_0} > \aleph_2$.
Therefore, assuming if $2^{\aleph_0}=\aleph_2$ (hence assuming CH is false) 
there is an $\aleph_0$-coloring of
$\nat$ such that there is no monochromatic distinct solution to $x_1+2x_2=x_3+x_4+x_5$.
This contradicts the converse of Theorem~\ref{th:gen}
\end{note}

\section{$\neg$ CH $\implies S$}

\begin{theorem}\label{th:true}
Assume $\neg$CH.
For any $\aleph_0$-coloring of $\real$
there exist distinct monochromatic $e_1,e_2,e_3,e_4$ such that
$$e_1+e_2=e_3+e_4.$$
\end{theorem}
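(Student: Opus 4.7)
The plan is to use pigeonhole together with two applications of Fodor's pressing-down lemma on $\omega_2$ to locate the required quadruple among differences of a long well-ordered sequence of reals.

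Since $\neg$CH implies $|\real|\ge\aleph_2$ by Fact~\ref{fa:omega}.2, I fix an injection $\alpha\mapsto x_\alpha$ from $\omega_2$ into $\real$, and let $\COL\colon\real\to\omega$ be the given $\aleph_0$-coloring. I restrict attention to the stationary set $E:=\{\alpha<\omega_2 : \mathrm{cf}(\alpha)=\omega_1\}$. For each $\alpha\in E$, pick a cofinal $B_\alpha\subseteq\alpha$ of order type $\omega_1$; the map $\beta\mapsto \COL(x_\alpha-x_\beta)$ sends $B_\alpha$ into the countable set $\omega$, so by the regularity of $\aleph_1$ some color $c(\alpha)\in\omega$ has a preimage $T_\alpha\subseteq B_\alpha$ of size $\aleph_1$. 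Since $B_\alpha$ has order type $\omega_1$, such a $T_\alpha$ is automatically cofinal in $\alpha$.

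Since $\omega_2$ is regular uncountable, the countable partition of $E$ by the value of $c(\alpha)$ has a stationary piece: some fixed color $c^*$ makes $A^*:=\{\alpha\in E : c(\alpha)=c^*\}$ stationary. Apply Fodor's lemma to the regressive map $\alpha\mapsto\min T_\alpha$ on $A^*$ to obtain $\gamma_0<\omega_2$ and a stationary $S_1\subseteq A^*$ with $\min T_\alpha=\gamma_0$ throughout $S_1$. For every $\alpha\in S_1$ with $\alpha>\gamma_0$, the set $T_\alpha\setminus(\gamma_0+1)$ is still cofinal in $\alpha$ (since $T_\alpha$ is cofinal and only a bounded initial part is removed), hence has a minimum larger than $\gamma_0$. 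A second Fodor application to $\alpha\mapsto\min\bigl(T_\alpha\setminus(\gamma_0+1)\bigr)$ yields $\gamma_1$ with $\gamma_0<\gamma_1<\omega_2$ and a stationary $S_2\subseteq S_1$ on which this minimum equals $\gamma_1$. Thus for every $\alpha\in S_2$, both $\gamma_0,\gamma_1\in T_\alpha$, which gives $\COL(x_\alpha-x_{\gamma_0})=\COL(x_\alpha-x_{\gamma_1})=c^*$.

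To finish, pick $\alpha_2\in S_2$ with $\alpha_2>\gamma_1$ and then $\alpha_3\in S_2$ with $\alpha_3>\alpha_2$, excluding the at most two values of $x_{\alpha_3}$ that would yield $x_{\alpha_2}+x_{\gamma_1}=x_{\alpha_3}+x_{\gamma_0}$ or $x_{\alpha_3}+x_{\gamma_1}=x_{\alpha_2}+x_{\gamma_0}$; this is possible because $|S_2\cap(\alpha_2,\omega_2)|=\aleph_2$. Set $e_1=x_{\alpha_2}-x_{\gamma_0}$, $e_2=x_{\alpha_3}-x_{\gamma_1}$, $e_3=x_{\alpha_3}-x_{\gamma_0}$, $e_4=x_{\alpha_2}-x_{\gamma_1}$. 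Then $e_1+e_2=e_3+e_4$ is an algebraic identity, all four reals lie in the color class $c^*$, and a short case check (using injectivity of $\alpha\mapsto x_\alpha$ together with the two excluded relations) forces pairwise distinctness. The main delicate point is setting up the second Fodor application: a generic $\aleph_1$-size subset $T_\alpha$ of $\alpha$ could be bounded below $\alpha$, killing regressivity at the second step; restricting to $\alpha$ of cofinality $\omega_1$ and choosing $T_\alpha$ inside a fixed cofinal $\omega_1$-sequence is exactly what forces $T_\alpha$ to be cofinal in $\alpha$ and makes both pressing-down steps valid.
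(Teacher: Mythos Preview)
Your proof is correct, but it takes a genuinely different route from the paper's. The paper's argument is a bare-hands double pigeonhole: for each $\beta\in\omega_2$ it records a triple $(\alpha_1,\alpha_2,\alpha_3)\in\omega_1^3$ and a color $i\in\omega$ with $\COL(x_{\alpha_j}+x_\beta)=i$ for $j=1,2,3$, then collides two $\beta$'s by the cardinality inequality $\aleph_2>|\omega_1^3\times\omega|=\aleph_1$; the third $\alpha$ is carried along solely to handle the distinctness of the $e_i$'s at the end. Your argument instead restricts to ordinals of cofinality $\omega_1$, builds cofinal monochromatic sets $T_\alpha\subseteq\alpha$, and uses two applications of Fodor's pressing-down lemma to extract common witnesses $\gamma_0<\gamma_1$ across a stationary family, after which two further $\alpha$'s finish the job. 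The paper's approach is considerably more elementary---no stationarity, no cofinality bookkeeping, no Fodor---which suits its expository goal of being accessible to a reader without a set-theoretic background. Your approach is the one a working set theorist would likely write down first; it is a standard template that scales cleanly (e.g., iterating Fodor would give longer common tuples $\gamma_0<\cdots<\gamma_k$), at the cost of invoking machinery the paper deliberately avoids.
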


\begin{proof}
By Fact~\ref{fa:omega}
there is an injection of $\omega_2$ into $\real$.
If $\alpha \in \omega_2$, then $x_\alpha$ is the real associated to it.

Given an $\aleph_0$-coloring $\COL$ of $\real$
we show that there exist distinct monochromatic $e_1,e_2,e_3,e_4$
such that $e_1+e_2=e_3+e_4$.

We define a map $F$ from
$\omega_2$ to $\omega_1\times\omega_1\times\omega_1\times\omega$ as follows:
\begin{enumerate}
\item
Let $\beta\in \omega_2$.
\item
Define a map from $\omega_1$ to $\omega$ by
$$\alpha\mapsto \COL(x_\alpha+x_\beta).$$
\item
Let $\alpha_1,\alpha_2,\alpha_3\in\omega_1$ be distinct elements of $\omega_1$, 
and $i\in\omega$, such that $\alpha_1,\alpha_2,\alpha_3$ all map to $i$.
Such $\alpha_1,\alpha_2,\alpha_3,i$ clearly exist since $\aleph_0 +\aleph_0 = \aleph_0 <\aleph_1$.
(There are $\aleph_1$ many elements that map to the same element of
$\omega$, but we do not need that.)
\item
Map $\beta$ to $(\alpha_1,\alpha_2,\alpha_3,i)$.
\end{enumerate}

Since $F$ maps a set of cardinality $\aleph_2$ to a set of
cardinality $\aleph_1$, there exists some element that is
mapped to twice by $F$ (actually there is an element that is
mapped to $\aleph_2$ times, but we do not need this).
Let $\alpha_1,\alpha_2,\alpha_3$ be distinct elements of $\omega_1$,
$i\in \omega$, and $\beta,\betap$ be distinct elements of $\omega_2$, such that
$$F(\beta)=F(\betap)=(\alpha_1,\alpha_2,\alpha_3,i).$$

Choose distinct $\alpha,\alphap\in\{\alpha_1,\alpha_2,\alpha_3\}$ such that $x_\alpha - x_\alphap \notin \{x_\beta - x_\betap,x_\betap - x_\beta\}$.  We can do this because there are at least three possible values for $x_\alpha - x_\alphap$.

Since $F(\beta)=(\alpha_1,\alpha_2,\alpha_3,i)$, we have
$$\COL(x_\alpha+x_\beta)=\COL(x_\alphap+x_{\beta})=i.$$

Since $F(\betap)=(\alpha_1,\alpha_2,\alpha_3,i)$, we have
$$\COL(x_\alpha+x_\betap)=\COL(x_\alphap+x_\betap)=i.$$

Let
\begin{eqnarray*}
e_1&=&x_\alpha+x_\beta\\
e_2&=&x_\alphap+x_\betap\\
e_3&=&x_\alphap+x_\beta\\
e_4&=&x_\alpha+x_\betap.
\end{eqnarray*}

Then
$$\COL(e_1)=\COL(e_2)=\COL(e_3)=\COL(e_4)=i$$
and
$$e_1+e_2=e_3+e_4=x_\alpha+x_\alphap+x_\beta+x_\betap.$$

Since $x_\alpha \ne x_\alphap$ and $x_\beta \ne x_\betap$, we have $\{e_1,e_2\} \cap \{e_3,e_4\} = \es$.

Moreover, the equation $e_1=e_2$ is equivalent to
$$x_\alpha - x_\alphap = x_\betap - x_\beta,$$
which is ruled out by our choice of $\alpha,\alphap$, and so $e_1\ne e_2$.

Similarly, $e_3 \ne e_4$.

Thus $e_1,e_2,e_3,e_4$ are all distinct.
\end{proof}

\section{A Generalization}\label{se:gen}

Recall that, for all $k\ge 1$, $\real$ and $\real^k$ are isomorphic as vector spaces over $\rat$.
Hence all the results of the last two sections about $\aleph_0$-colorings of $\real$ hold for $\real^k$.
In this more geometrical context, $e_1,e_2,e_3,e_4$ are vectors in $k$-dimensional Euclidean space, and the equation $e_1+e_2=e_3+e_4$ says that $e_1,e_2,e_3,e_4$ are the vertices of a parallelogram (whose area may be zero).  In particular, we have the following two theorems:

\begin{theorem}\label{th:gen2}
Fix any integer $k\ge 1$.  The following are equivalent:
\begin{enumerate}
\item $2^{\aleph_0} > \aleph_1$.
\item For any $\aleph_0$-coloring of $\real^k$, there exist distinct monochromatic vectors $e_1,e_2,e_3,e_4 \in \real^k$ such that $e_1+e_2 = e_3+e_4$.
\end{enumerate}
\end{theorem}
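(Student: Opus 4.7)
The plan is to reduce Theorem~\ref{th:gen2} to the $k=1$ case already established by Theorems~\ref{th:false} and~\ref{th:true}, by transferring $\aleph_0$-colorings through the stated $\rat$-vector space isomorphism $\phi\colon \real \to \real^k$. The key observation is that the condition ``$e_1+e_2 = e_3+e_4$'' is a $\rat$-linear relation (with integer coefficients), so it is preserved and reflected by every $\rat$-linear bijection, while distinctness is preserved because $\phi$ is injective. Thus a monochromatic distinct solution in one space corresponds exactly to a monochromatic distinct solution in the other, under the coloring transported by $\phi$.

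First I would pin down the isomorphism. A Hamel basis of $\real$ over $\rat$ has cardinality $2^{\aleph_0}$, and since $k \cdot 2^{\aleph_0} = 2^{\aleph_0}$ the $\rat$-space $\real^k$ has the same dimension; hence a $\rat$-linear bijection $\phi\colon\real\to\real^k$ exists. (This invocation of a Hamel basis uses the Axiom of Choice, which is part of ZFC.)

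For $(1)\Rightarrow(2)$, I would let $\chi\colon\real^k\to\omega$ be an arbitrary $\aleph_0$-coloring and consider the pulled-back coloring $\chi\circ\phi\colon\real\to\omega$. Assuming $2^{\aleph_0}>\aleph_1$, Theorem~\ref{th:true} applied to $\chi\circ\phi$ produces distinct monochromatic $e_1,e_2,e_3,e_4\in\real$ with $e_1+e_2=e_3+e_4$; then $\phi(e_1),\phi(e_2),\phi(e_3),\phi(e_4)\in\real^k$ are four distinct vectors all receiving the same $\chi$-color, and by $\rat$-linearity of $\phi$ they satisfy $\phi(e_1)+\phi(e_2)=\phi(e_3)+\phi(e_4)$. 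For $(2)\Rightarrow(1)$ I would argue the contrapositive: if CH holds, Theorem~\ref{th:false} produces an $\aleph_0$-coloring $\chi'\colon\real\to\omega$ with no distinct monochromatic solution to $e_1+e_2=e_3+e_4$, and the pushed-forward coloring $\chi'\circ\phi^{-1}\colon\real^k\to\omega$ must likewise have no such solution, since any putative solution in $\real^k$ would pull back (again by $\rat$-linearity, this time of $\phi^{-1}$) to a forbidden solution in $\real$.

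I do not expect any genuine obstacle here: the entire argument is just the observation that a $\rat$-linear bijection preserves both the linear equation $e_1+e_2=e_3+e_4$ and the coloring structure. The only mildly subtle point is the cardinal arithmetic giving $\dim_{\rat}\real^k=\dim_{\rat}\real$, which is standard once a Hamel basis is in hand.
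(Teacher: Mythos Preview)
Your proposal is correct and is precisely the argument the paper intends: the paper simply states that $\real$ and $\real^k$ are isomorphic as $\rat$-vector spaces and that therefore the results of Theorems~\ref{th:false} and~\ref{th:true} carry over verbatim. You have spelled out the transfer in more detail than the paper does, but the method is identical.
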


\begin{theorem}\label{th:gen3}
Fix any integers $k\ge 1$ and $n\ge 2$, and let $a_1,\ldots,,a_n\in\real$ be nonzero.  
Assume CH.
Then there exists
an $\aleph_0$-coloring of $\real^k$ such that there are no distinct monochromatic vectors $e_1,\ldots,e_n\in\real^k$ such that
\[ \sum_{i=1}^n a_i e_i = 0. \]
\end{theorem}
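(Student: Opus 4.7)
The plan is to adapt the proof of Theorem~\ref{th:gen} to vectors, with the only real subtlety being the definition of the closure operation. In the one-dimensional proof, $\CL(X)$ was closed under the four field operations, which was more than enough to solve the equation $\sum a_i e_i = 0$ for $e_n$. For vectors in $\real^k$ we have no ring structure to speak of, but we do not need one: all that is needed is closure under addition and under scalar multiplication by the coefficients $a_1,\ldots,a_n$ (and by $1/a_n$). The natural choice is to let $F = \rat(a_1,\ldots,a_n) \subseteq \real$ be the countable subfield generated by the coefficients, and redefine $\CL(X)$ to be the $F$-linear span of $X$ inside the $F$-vector space $\real^k$.

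With this revised closure, the analogues of Lemma~\ref{le:cl} go through unchanged: if $X \subseteq \real^k$ is countable, then $\CL(X)$, being the set of finite $F$-linear combinations of elements of $X$, is a countable union of countable sets and hence countable; monotonicity is immediate. I would then invoke CH exactly as in Theorem~\ref{th:false} (noting $|\real^k| = 2^{\aleph_0} = \aleph_1$ under CH) to enumerate $\real^k = \{x_\alpha : \alpha < \omega_1\}$, and define
\[
X_\alpha = \{x_\beta : \beta < \alpha\},\quad Y_\alpha = \CL(X_\alpha),\quad Z_\alpha = Y_\alpha \setminus \bigcup_{\beta<\alpha} Y_\beta,
\]
obtaining a partition of $\real^k$ into countable pieces $\{Z_\alpha\}_{\alpha<\omega_1}$. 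Color each $Z_\alpha$ with a fresh copy of $\omega$ so that distinct points of $Z_\alpha$ get distinct colors, giving an $\aleph_0$-coloring of $\real^k$.

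For the contradiction step, suppose distinct monochromatic $e_1,\ldots,e_n \in \real^k$ satisfy $\sum_{i=1}^n a_i e_i = 0$. Let $e_i \in Z_{\alpha_i}$; since points in the same $Z_\alpha$ have distinct colors, the $\alpha_i$ are distinct, and I may assume $\alpha_1 < \cdots < \alpha_n$. Then $e_1,\ldots,e_{n-1} \in Y_{\alpha_{n-1}}$. Because $Y_{\alpha_{n-1}}$ is an $F$-subspace of $\real^k$ and each coefficient $-a_i/a_n$ lies in $F$, the identity
\[
e_n = -\frac{1}{a_n}\sum_{i=1}^{n-1} a_i e_i
\]
forces $e_n \in Y_{\alpha_{n-1}} \subseteq \bigcup_{\beta<\alpha_n} Y_\beta$, contradicting $e_n \in Z_{\alpha_n}$.

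I do not foresee a substantive obstacle: the one design decision is choosing $F$ small enough to keep $\CL$ countability-preserving but large enough to close the equation, and $\rat(a_1,\ldots,a_n)$ does both. Everything else is a mechanical transcription of the argument for Theorem~\ref{th:gen}, with ``field generated by $X$'' replaced by ``$F$-span of $X$.''
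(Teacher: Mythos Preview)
Your proof is correct, but it takes a different route from the paper. The paper does not re-run the construction in $\real^k$; instead it observes at the start of Section~\ref{se:gen} that $\real$ and $\real^k$ are isomorphic as $\rat$-vector spaces and simply transports Theorem~\ref{th:gen} across that isomorphism. You, by contrast, adapt the proof of Theorem~\ref{th:gen} directly, replacing the field closure $\CL$ by the $F$-linear span for $F=\rat(a_1,\ldots,a_n)$. This is a clean choice: it keeps $\CL$ countability-preserving while guaranteeing that the expression $e_n=-(1/a_n)\sum_{i<n}a_ie_i$ lands in $Y_{\alpha_{n-1}}$. In fact your formulation handles arbitrary real coefficients more transparently than the paper's one-liner does---a $\rat$-linear isomorphism only literally carries the equation $\sum a_ie_i=0$ across when each $a_i\in\rat$; for real $a_i$ one should use an $F$-linear isomorphism $\real^k\cong\real$, which exists for the same cardinality reason. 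The trade-off is that the paper's reduction is a single sentence once Theorem~\ref{th:gen} is in hand, whereas your argument is self-contained and makes the role of the coefficients explicit.
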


\section{More is Known: The Original Rado's Theorem}\label{se:moref}

Theorem~\ref{th:four} is a special case of a general theorem
about colorings and equations.

\begin{definition}\label{de:reg}~
Let $\vec b = (b_1,\ldots,b_n)\in \Z^n$
\begin{enumerate}
\item
$\vec b$ is {\it regular}
if the following holds:
{\it For all finite colorings of $\nat^+$
there exist
monochromatic $e_1,\ldots,e_n\in \nat^+$ such that
$$\sum_{i=1}^n b_ie_i = 0.$$
}
\item
$\vec b$ is {\it distinct regular}
if the following holds:
{\it For all finite colorings of 
$\nat^+$  there exist
monochromatic $e_1,\ldots,e_n\in \nat^+$, all distinct,
such that
$$\sum_{i=1}^n b_ie_i = 0.$$
}
\end{enumerate}
\end{definition}

In 1916 Schur~\cite{Schur} (see also \cite{VDWbook,GRS}) 
proved that, for any finite coloring of $\nat^+$, there is a monochromatic
solution to $x+y=z$. Using the above terminology he proved that
$(1,1,-1)$ was regular. 
For him this was a Lemma en route to an alternative proof to the following theorem
of Dickson~\cite{Dicksonlower}:

{\it For all $n\ge 2$ there is a prime $p_0$ such that, for all primes $p\ge p_0$,
 $x^n+y^n = z^n$ has a nontrivial solution mod $p$. }

For an English version of Schur's proof of Dickson's theorem see either
the book by Graham-Rothschild-Spencer~\cite{GRS} or the free online book
by Gasarch-Kruskal-Parrish~\cite{VDWbook}.

Schur's student Rado~\cite{radogerman,radoenglish} (see also \cite{VDWbook,GRS})
proved the following generalization of Schur's lemma:

\begin{theorem}\label{th:rado}~
\begin{enumerate}
\item
$\vec b$ is regular iff
some subset of $b_1,\ldots,b_n$ sums to 0.
\item
$\vec b$ is distinct-regular iff
some subset of $b_1,\ldots,b_n$ sums to 0
and there exists a vector $\vec \lambda$ of distinct
reals such that $\vec b \cdot \vec \lambda = 0$.
\end{enumerate}
\end{theorem}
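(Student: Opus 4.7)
The plan is to split Theorem~\ref{th:rado} into part 1 (regularity) and then bootstrap to part 2 (distinct regularity). Part 1 itself has two directions, and I would start with the easier ``only if''.

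For the ``only if'' of part 1, I would construct an explicit finite coloring of $\nat^+$ that defeats any $\vec b$ whose entries admit no zero subset-sum. The standard choice is a $p$-adic coloring: fix a prime $p > \sum_i |b_i|$, write each $m\in\nat^+$ uniquely as $m = p^{v(m)} u(m)$ with $p\nmid u(m)$, and set $\COL(m) = u(m) \bmod p$. Given a supposed monochromatic solution $\sum b_i e_i = 0$ of common color $u$, let $k=\min_i v(e_i)$ and let $I\subseteq[n]$ be the indices realizing this minimum. Reducing the equation modulo $p^{k+1}$ kills every term with $i\notin I$ and leaves $p^k u \sum_{i\in I} b_i \equiv 0 \pmod{p^{k+1}}$. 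Thus $\sum_{i\in I} b_i \equiv 0 \pmod p$, and since $|\sum_{i\in I} b_i| < p$ this forces $\sum_{i\in I} b_i = 0$, contradicting the hypothesis on $\vec b$.

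For the ``if'' of part 1, the key tool is Brauer's strengthening of van der Waerden: for any finite coloring of $\nat^+$ and any $N$, there exist positive $a,d$ such that $\{a, d, a+d, a+2d, \ldots, a+Nd\}$ is monochromatic. Given a subset $I$ with $\sum_{i\in I} b_i = 0$, I would set $e_i = a$ for $i\in I$ and $e_j = a + t_j d$ for $j\notin I$, and then choose positive integers $t_j$ to satisfy $\sum_{j\notin I} b_j t_j = -\tfrac{a}{d}\sum_i b_i$. Arranging $d\mid a$ inside the Brauer search (by absorbing $d$ into the pattern) makes the right-hand side an integer, and the inclusion of $d$ itself in the monochromatic set handles the case $\sum_{j\notin I} b_j \ne 0$ by a small case analysis.

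Part 2 then follows once part 1 is in hand. Necessity is immediate: distinct-regularity trivially implies regularity, so part 1 supplies the subset-sum condition, and applying distinct-regularity to the one-color coloring of $\nat^+$ produces distinct positive integers $\lambda_1,\ldots,\lambda_n$ with $\vec b \cdot \vec\lambda = 0$, which gives the required real witness. For sufficiency, I would rerun the part 1 construction but replace the template $e_j = a + t_j d$ by $e_j = a + \lambda_j d$ (after clearing denominators and shifting so that the $\lambda_j$ are positive integers). The identity $\vec b \cdot \vec\lambda = 0$ collapses the extra mass from the $\lambda_j$'s, the indices in $I$ can be spread across the Brauer set using a different shift pattern (feasible once $N$ is taken large enough), and distinctness of the $\lambda_j$ propagates to distinctness of the $e_i$.

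The principal obstacle is the ``if'' direction of part 1: threading a single zero subset-sum through the combinatorial structure of a Brauer AP so that $\sum b_i e_i$ lands exactly on zero is the technically heaviest step, and the standard textbook proofs (e.g., in Graham--Rothschild--Spencer) devote several pages to this accounting. The distinctness upgrade in part 2 is a comparatively light add-on, driven entirely by the hypothesis on $\vec\lambda$.
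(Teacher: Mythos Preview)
The paper does not supply its own proof of Theorem~\ref{th:rado}. It is stated as a classical result of Rado and attributed to the references \cite{radogerman,radoenglish,VDWbook,GRS}; no argument appears in the text. So there is no in-paper proof to compare your proposal against.

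That said, your outline is the standard one found in those references. The $p$-adic coloring for the ``only if'' of part~1 is exactly the textbook argument and is correctly executed. The ``if'' direction via Brauer's theorem is also the usual route, though your sketch is loose at the crucial step: after setting $e_i=a$ for $i\in I$ and $e_j=a+t_jd$ for $j\notin I$, the equation you must solve is $\sum_{j\notin I} b_j t_j = -(a/d)\sum_{j\notin I} b_j$, and ``arranging $d\mid a$ by absorbing $d$ into the pattern'' together with ``a small case analysis'' hides real work---one must control the sign and size of the $t_j$ simultaneously with the Brauer parameters, and the standard proofs handle this by an iterated application rather than a single shot. Your part~2 necessity is clean; the sufficiency sketch (``spread the indices in $I$ across the Brauer set using a different shift pattern'') is again in the right spirit but would need the same kind of careful bookkeeping to actually deliver distinct $e_i$. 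None of this is wrong as a plan, but the heavy step you flag is genuinely heavy, and the proposal as written is an outline rather than a proof.
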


\begin{note}~
\begin{enumerate}
\item
Rado's theorem is about any finite coloring. What about any (say) 3-coloring?
An equation is $k$-regular if for any $k$-coloring of $\nat$ there is a
monochromatic solution. There is no known characterization of
which equations are $k$-regular. 
Alexeev and Tsimmerman~\cite{AlexeevTs} have shown that
there are equations that are $(k-1)$-regular that are not $k$-regular.
\item
Theorem~\ref{th:rado} is Rado's theorem for single equations. There is a version
for sets of linear equations which you can find in~\cite{GRS,VDWbook}.
\end{enumerate}
\end{note}

We want to summarize the equivalence of $S$ and $\neg CH$ using the notion of
regularity.

\begin{definition}
$\vec b$ is {\it $\aleph_0$-distinct regular}
if the following holds:
{\it For all $\aleph_0$-colorings of $\real$
there exist
distinct monochromatic $e_1,\ldots,e_n\in \real$ such that
\begin{equation}\label{eqn:regular}
\sum_{i=1}^n b_ie_i = 0.
\end{equation}
}
\end{definition}

\begin{notation}
We may also say that an equation is $\aleph_0$-distinct-regular.
For example, the statement
$x_1+x_2=x_3+x_4$ is $\aleph_0$-distinct-regular means that
$(1,1,-1,-1)$ is $\aleph_0$-distinct-regular.
\end{notation}

If we combine Theorems~\ref{th:false} and \ref{th:true} and use this definition of regular
we obtain the following.
\begin{theorem}\label{th:reg}
$x_1+x_2=x_3+x_4$ is $\aleph_0$-distinct-regular iff $2^{\aleph_0} > \aleph_1$.
\end{theorem}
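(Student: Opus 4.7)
The plan is to observe that Theorem~\ref{th:reg} is essentially a repackaging of the two main results already established, namely Theorem~\ref{th:false} and Theorem~\ref{th:true}, into the language of regularity. First I would unpack the definitions: the equation $x_1+x_2=x_3+x_4$ corresponds to the coefficient vector $\vec b=(1,1,-1,-1)$, so the assertion that this equation is $\aleph_0$-distinct-regular is literally the statement that every $\aleph_0$-coloring of $\real$ admits distinct monochromatic $e_1,e_2,e_3,e_4$ with $e_1+e_2=e_3+e_4$. I would also note that in ZFC the hypothesis $2^{\aleph_0}>\aleph_1$ is just a restatement of $\neg$CH, since CH is the assertion $2^{\aleph_0}=\aleph_1$ and the only alternative (given that $2^{\aleph_0}\ge\aleph_1$ always holds) is $2^{\aleph_0}>\aleph_1$.

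For the direction $2^{\aleph_0}>\aleph_1 \Longrightarrow \aleph_0$-distinct-regular, I would simply invoke Theorem~\ref{th:true}: its hypothesis ($\neg$CH) and its conclusion (existence of distinct monochromatic $e_1,e_2,e_3,e_4$ with $e_1+e_2=e_3+e_4$ for every $\aleph_0$-coloring of $\real$) match the needed implication verbatim.

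For the converse I would argue by contrapositive. Suppose $2^{\aleph_0}\not>\aleph_1$; then $2^{\aleph_0}=\aleph_1$, i.e., CH holds. Theorem~\ref{th:false} then produces an explicit $\aleph_0$-coloring of $\real$ admitting no distinct monochromatic quadruple satisfying $e_1+e_2=e_3+e_4$, which witnesses that $\vec b=(1,1,-1,-1)$ fails to be $\aleph_0$-distinct-regular. There is no genuine mathematical obstacle here: the substance of the theorem was already carried out in Sections 4 and 5, and all that remains is the bookkeeping of identifying the equation with its coefficient vector and recognizing $2^{\aleph_0}>\aleph_1$ as $\neg$CH.
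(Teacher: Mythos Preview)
Your proposal is correct and matches the paper's approach exactly: the paper does not even write out a formal proof but simply remarks that combining Theorems~\ref{th:false} and~\ref{th:true} with the definition of $\aleph_0$-distinct-regularity yields Theorem~\ref{th:reg}. Your unpacking of the definitions and the identification of $2^{\aleph_0}>\aleph_1$ with $\neg$CH is precisely the bookkeeping the paper leaves implicit.
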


What about other linear equations over the reals?
Jacob Fox~\cite{foxrado} has generalized Theorem~\ref{th:reg} to prove the following.

\begin{theorem}\label{th:fox}
Let $s\in\nat$.  The equation
\begin{equation}\label{eqn:fox}
x_1+sx_2=x_3+\cdots+x_{s+3}
\end{equation}
is $\aleph_0$-distinct regular iff $2^{\aleph_0}> \aleph_s$.
\end{theorem}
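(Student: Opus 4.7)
The plan for both directions is to generalize the $s=1$ proofs (Theorems~\ref{th:false} and~\ref{th:true}) by adding $s+1$ nested layers of indices.

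For the direction $2^{\aleph_0}>\aleph_s \Rightarrow$ $\aleph_0$-distinct-regular, I would iterate the pigeonhole step of Theorem~\ref{th:true} a total of $s+1$ times. Given an $\aleph_0$-coloring $\COL$ of $\real$ and an injection $\omega_{s+1}\hookrightarrow\real$ (which exists because $2^{\aleph_0}\ge\aleph_{s+1}$), the plan is to produce $s+1$ matched pairs of indices $(\alpha_k,\beta_k)$ for $k=1,\ldots,s+1$ and set
\begin{align*}
e_1 &= \sum_{k=1}^{s+1} x_{\alpha_k}, &
e_2 &= \sum_{k=1}^{s+1} x_{\beta_k},\\
e_{2+j} &= x_{\alpha_j}+\sum_{k\ne j} x_{\beta_k} & &(j=1,\ldots,s+1).
\end{align*}
A direct coefficient count verifies $e_1+se_2=e_3+\cdots+e_{s+3}$ identically, since each $x_{\alpha_k}$ appears once on each side and each $x_{\beta_k}$ appears $s$ times on each side. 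Monochromaticity is to be secured by $s+1$ nested pigeonholes, successively at levels $\omega_1,\omega_2,\ldots,\omega_{s+1}$: at each inner level one locates $3$ matching sub-indices mapping to a common color, and at the outermost one locates $2$; the $\binom{3}{2}=3$ slack at each inner level then lets us dodge the bounded algebraic degeneracies that would collapse two of the $e_i$, just as in the final step of Theorem~\ref{th:true}.

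For the direction $2^{\aleph_0}\le\aleph_s \Rightarrow$ an avoiding coloring exists, I would induct on $s$, with the base case $s=1$ provided by Theorem~\ref{th:false}. For $s\ge 2$, enumerate $\real=\{x_\alpha:\alpha<\omega_s\}$, let $Y_\alpha$ be the $\rat$-linear span of $\{x_\beta:\beta<\alpha\}$ (closed under solving the $s$-equation for any one variable, since all coefficients are integers), and let $Z_\alpha=Y_\alpha\setminus\bigcup_{\beta<\alpha}Y_\beta$. Each $Z_\alpha$ has cardinality at most $\aleph_{s-1}$. Color each $Z_\alpha$ using both a ``coset rational'' (the non-zero element of $Y_\alpha/\bigcup_{\beta<\alpha}Y_\beta\cong\rat$) and a recursive coloring of the ``base $y$-part'' provided by the inductive hypothesis. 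In the amalgamated coloring, a hypothetical distinct monochromatic solution $e_1,\ldots,e_{s+3}$ either has all stages $\alpha_i$ distinct (killed as in Theorem~\ref{th:false}, since the top variable is algebraically forced into an earlier $Y$), all stages coincident (killed by the within-$Z_\alpha$ recursive coloring together with the coset-rational part forcing equal $y$-coefficients), or is ``mixed,'' in which case the coset rationals reduce the problem to a derived equation on the $y$-parts at a lower stage that is then handled by the next level of induction.

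The main obstacle will be the mixed-stage case. The derived equation obtained by collapsing the coincident $e_i$'s is not the original $s$-equation but a shorter equation parametrized by a ``zero-sum'' subset of the coefficient vector $(1,s,-1,\ldots,-1)$, of which there are several shapes: $\{1,j\}$ for $j\in\{3,\ldots,s+3\}$, $\{2\}\cup T$ for $T\subseteq\{3,\ldots,s+3\}$ with $|T|=s$, and the whole vector. Setting up the inductive claim to cover the full family of such derived equations---and ensuring it stays strong enough to close the induction without requiring uncountably many colors---is the technical heart of Fox's argument, and is where the proof is genuinely harder than the $s=1$ case. For the positive direction, the algebra is routine; the only care needed is the nested-pigeonhole bookkeeping ensuring distinctness of all $s+3$ witnesses.
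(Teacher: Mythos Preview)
The paper does not actually prove Theorem~\ref{th:fox}; it merely states the result and attributes it to Fox~\cite{foxrado}, so there is no in-paper proof to compare your proposal against.

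That said, your outline is broadly in the spirit of Fox's argument. For the forward direction, your algebraic template is correct: with $e_1=\sum_k x_{\alpha_k}$, $e_2=\sum_k x_{\beta_k}$, and $e_{2+j}=x_{\alpha_j}+\sum_{k\ne j}x_{\beta_k}$, one does get $e_1+se_2=e_3+\cdots+e_{s+3}$ identically, and the nested pigeonhole idea (descending through $\omega_{s+1},\ldots,\omega_1$) is the right mechanism. For the backward direction, the stratification into $Z_\alpha$'s and an inductive coloring is also the right shape.

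Where your proposal falls short of a proof is exactly where you say it does. In the avoiding-coloring direction, the ``mixed-stage'' analysis is not just bookkeeping: you must formulate an inductive hypothesis that simultaneously handles every derived equation arising from collapsing a zero-sum subset of the coefficient vector $(1,s,-1,\ldots,-1)$, and you must verify that each such derived equation again has the form covered by the hypothesis at a strictly smaller cardinal. Your sketch names this issue but does not resolve it, and the details (in particular, why the family of equations closes under this reduction and why the ``coset rational'' component suffices to force the needed cancellations) are the substance of Fox's paper. As written, this is a reasonable plan, not yet a proof.
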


%

\section{More is Known: Work Over a Field}

In Definition~\ref{de:reg} we defined when a tuple of integers is regular.
If we are concerned with coloring a field $F$ then we can easily define what
it means for a tuple of elements of a $F$ to be regular (or distinct-regular).

We state and prove a theorem of Ceder~\cite[Theorem~4]{Ceder:countable} in a way that gives us information about when 
$(b_1,b_2,b_3)$ is $\aleph_0$-distinct regular for $b_1,b_2,b_3$ in any field
We uses no assumptions outside of ZFC.
The proof is essentially Ceder's.

\begin{theorem}\label{th:field}
Let $F$ be any field.  For any $\gamma \in F - \{0,1\}$, there exists an $\aleph_0$-coloring of $F$ such that there are no distinct monochromatic $x,y,z\in F$ such that
\begin{equation}\label{eqn:gen-triangle}
z - x = \gamma(y - x).
\end{equation}
\end{theorem}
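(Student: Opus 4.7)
The plan is to construct the coloring by means of a Hamel basis of $F$ over a suitable countable subfield, coloring each $x \in F$ by the sequence of its coefficients listed in decreasing order of basis elements.

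First I would let $K$ be the subfield of $F$ generated by $\gamma$. Since $K$ is obtained from the prime subfield of $F$ by adjoining the single element $\gamma$, $K$ is countable, and it contains $\gamma$ and $1-\gamma$, both nonzero. Next, view $F$ as a $K$-vector space; by Zorn's lemma choose a Hamel basis $B\subseteq F$, and fix a well-ordering on $B$. For $x\in F$, let $\support(x)\subseteq B$ be its (finite) support, enumerate $\support(x) = \{b_1(x) > \cdots > b_k(x)\}$ in decreasing order, and let $\phi(x) = (c_{b_1(x)}(x),\ldots,c_{b_k(x)}(x))$ be the resulting tuple of nonzero $K$-coefficients, with $\phi(0)$ the empty tuple. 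Color $x$ by $\phi(x)$. Since $\bigcup_{k\ge 0} K^k$ is countable, this is an $\aleph_0$-coloring.

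Rewriting the hypothesized relation as $z = (1-\gamma)x+\gamma y$, I would suppose for contradiction that distinct $x,y,z$ all receive the same color and prove by induction on the common length $k$ of $\phi(x)=\phi(y)=\phi(z)$ that this cannot happen. The case $k=0$ forces $x=y=z=0$, contradicting distinctness. For $k\ge 1$ let $c_1$ denote the common (nonzero) first entry of the tuple. If $b_1(x) > b_1(y)$, then the coefficient of $b_1(x)$ in $z$ equals $(1-\gamma)c_1 \ne 0$, so $b_1(z)=b_1(x)$ and the first coordinate of $\phi(z)$ equals $(1-\gamma)c_1$; equating with $c_1$ gives $\gamma c_1 = 0$, hence $c_1=0$, a contradiction. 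The case $b_1(x) < b_1(y)$ is symmetric. In the remaining case $b_1(x)=b_1(y)=b$, the coefficient of $b$ in $z$ equals $(1-\gamma)c_1+\gamma c_1 = c_1 \ne 0$, so $b_1(z)=b$ as well; subtracting $c_1 \cdot b$ from each of $x,y,z$ then yields distinct $x',y',z'$ which satisfy the same linear relation $z' = (1-\gamma)x'+\gamma y'$ and have $\phi(x')=\phi(y')=\phi(z')=(c_2,\ldots,c_k)$, so the inductive hypothesis applies.

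The key design choice is insisting that $\gamma$ lie in the base field $K$: this makes scalar multiplication by $\gamma$ and by $1-\gamma$ act coordinate-wise on basis expansions, so that a single $K$-valued coefficient tuple detects the relation $z=(1-\gamma)x+\gamma y$. I expect the main subtlety to be the equal-top-basis-element case, which requires the subtract-and-recurse step; this works precisely because $\gamma\in K$, so that peeling off a $K$-multiple of the top basis element preserves both the linear relation and the tails of all three coefficient tuples. Notably no set-theoretic hypothesis beyond ZFC is used, which is consistent with the fact that three-variable affine relations of this form are fundamentally easier to avoid monochromatically than the four-variable equation $x_1+x_2=x_3+x_4$ of Theorem~\ref{th:reg}.
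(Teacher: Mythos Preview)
Your proof is correct and follows essentially the same approach as the paper: color each element by the tuple of its nonzero coordinates with respect to a Hamel basis over a countable subfield $K\ni\gamma$, then exploit that $z=(1-\gamma)x+\gamma y$ holds coordinatewise. The only difference is cosmetic---the paper lists coordinates in increasing index order and locates the contradiction directly at the least index in $\support(x)\mathop{\triangle}\support(y)$, whereas you list them in decreasing order and peel off the common top coordinate inductively; these are the same argument packaged two ways.
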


\begin{proof}
Let $K$ be some countable subfield of $F$ containing $\gamma$.  Choose a basis $\{b_i\}_{i\in I}$ of $F$ as a vector space over $K$, where $I$ is some index set with 
a linear order $<$.  
Then for any $w\in F$, there are unique coordinates $\{w_i\}_{i\in I}$ where each $w_i\in K$, only finitely many of the $w_i$ are nonzero, and
\[ w = \sum_{i\in I} w_i b_i. \]
Define the \emph{support} of $w$ as
\[ \support(w) := \{ i\in I \st w_i \ne 0 \} = \{i_1 < i_2 < \cdots < i_k\} \]
for some $k$.  Then define the \emph{signature} of $w$ as the $k$-tuple of the nonzero coordinates of $w$, namely

\[ \sg(w) := (w_{i_1},w_{i_2},\ldots,w_{i_k}). \]

Note that $\support(w)$ and $\sg(w)$ together uniquely determine $w$.  
Also note that there are only countably many possible signatures.
This is the key to how we define our $\aleph_0$-coloring:

$$COL(w) = \sg(w).$$

We will not use the notation $COL$ since we have $\sg$.

Assume, by way of contradiction, that $x,y,z\in F$ are distinct, satisfying Equation~(\ref{eqn:gen-triangle}),
such that $\sg(x)=\sg(y)=\sg(z)$.
Equation~(\ref{eqn:gen-triangle}) is equivalent to
\[ z = \gamma y + (1-\gamma)x, \]
or equivalently, since $\gamma\in K$,
\[ (\forall i\in I) [ \;z_i = \gamma y_i + (1-\gamma)x_i\;]. \]
Since $\sg(x) = \sg(y)$ and $x \ne y$, we must have $\support(x) \ne \support(y)$.  
Let $\ell\in I$ be the least element of $\support(x) \mathop{\triangle}\support(y)$.  Then for every $j<\ell$, we have $x_j = y_j$, and so
\[ z_j = \gamma y_j + (1-\gamma) x_j = y_j = x_j. \]
We now have two cases for $\ell$:
\begin{description}
\item[Case 1: $\ell \in \support(y)$.]  Then $y_\ell\ne 0$ and $x_\ell = 0$, because $\ell\notin\support(x)$.  This gives
\[ z_\ell = \gamma y_\ell \notin \{0,y_\ell\}, \]
which puts $\ell$ into $\support(z)$ and forces $\sg(z) \ne \sg(y)$. Contradiction.
\item[Case 2: $\ell \in \support(x)$.]  A similar argument, swapping the roles of $x$ and $y$ and swapping $\gamma$ with $1-\gamma$, shows that $\sg(z) \ne \sg(x)$. Contradiction.
\end{description}
\end{proof}

\begin{corollary}
Let $F$ be any field.  For any $b_1,b_2,b_3\in F$ not all zero, if $b_1+b_2+b_3 = 0$, then $(b_1,b_2,b_3)$ is not $\aleph_0$-distinct regular.
\end{corollary}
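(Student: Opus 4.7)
The plan is to reduce the corollary directly to Theorem~\ref{th:field} by identifying the three-term equation $b_1 x_1 + b_2 x_2 + b_3 x_3 = 0$ with the theorem's equation $z - x = \gamma(y - x)$ for an appropriate $\gamma$. Expanding the theorem's equation gives $(\gamma - 1)x + (-\gamma)y + z = 0$, whose coefficient triple $(\gamma - 1, -\gamma, 1)$ indeed sums to zero and has all entries nonzero precisely when $\gamma \notin \{0,1\}$. So the heart of the proof is to produce a valid $\gamma$ from $(b_1,b_2,b_3)$.

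First I would dispose of the degenerate cases in which some $b_i$ is zero. Since $b_1+b_2+b_3=0$ and the triple is not identically zero, if one coefficient vanishes then the other two are nonzero negatives of each other and the equation collapses to something of the form $b_i(x_i - x_j) = 0$, forcing two of the three variables to coincide. In that situation no distinct triple $(x_1,x_2,x_3)$ can satisfy the equation at all, so any $\aleph_0$-coloring (for instance the trivial one assigning a unique color to every element) witnesses the claim.

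In the main case all three $b_i$ are nonzero. Then I would set $\gamma := -b_2/b_3 \in F$. Because $b_1 \ne 0$ we have $b_2 + b_3 \ne 0$, hence $\gamma \ne 1$; because $b_2 \ne 0$ we have $\gamma \ne 0$. Using $b_1 + b_2 + b_3 = 0$, a short check gives $b_3(\gamma - 1) = b_1$ and $-b_3\gamma = b_2$, so multiplying the theorem's rewritten equation $(\gamma-1)x - \gamma y + z = 0$ by $b_3$ yields exactly $b_1 x + b_2 y + b_3 z = 0$. Thus the $\aleph_0$-coloring produced by Theorem~\ref{th:field} for this value of $\gamma$ admits no distinct monochromatic $x,y,z$ satisfying the equation of the corollary, and the same coloring works.

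There is no real obstacle; the only subtlety is keeping track of which coefficient plays which role and verifying that $\gamma$ avoids $0$ and $1$ under the nondegeneracy assumptions. The small case analysis on whether some $b_i$ vanishes is the only place where one must pause, and it is handled trivially since such equations have no distinct solutions at all.
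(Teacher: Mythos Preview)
Your proof is correct and follows essentially the same route as the paper: dispose of the cases with a zero coefficient by observing that no distinct solution exists, then in the nondegenerate case set $\gamma = -b_2/b_3$, verify $\gamma\notin\{0,1\}$, and invoke Theorem~\ref{th:field}. One tiny slip: your parenthetical example of ``assigning a unique color to every element'' is not an $\aleph_0$-coloring when $F$ is uncountable, but this is harmless since any $\aleph_0$-coloring (e.g., the constant one) already works in the degenerate case.
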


\begin{proof}
If $b_3=0$ then $b_2+b_3=0$ so $b_2=-b_3$. In this case we need to show that $(b_2,-b_2)$ is not $\aleph_0$-distinct-regular.
that is, we must show that there is a finite coloring of $F$ such that $b_2x=b_2y$ has no monochromatic
solution with $x\ne y$. Since any solution implies $x=y$ any coloring will suffice.
By similar reasoning we can dispense with the case where any of $b_2$ or $b_3$ is 0.

We want an $\aleph_0$-coloring of $F$ where there is no monochromatic distinct solution to

$$b_1e_1+b_2e_2+b_3e_3  = 0.$$

Dividing by $b_3$ and rearranging we obtain
\begin{equation}\label{eqn:triangle}
e_3 - e_1 = \gamma (e_2 - e_1),
\end{equation}

\noindent
where $\gamma := - b_2/b_3$.  Note that $\gamma\notin \{0,1\}$.
The desired $\aleph_0$-coloring of $F$ exists by Theorem~\ref{th:field}.
\end{proof}

\section{Is the Statement Really Natural?}


Theorem~\ref{th:false} and \ref{th:true} are stated as though they are about $\real$.
However, all that is used about $\real$ is that it is a vector space over $\rat$.
Hence the proof we gave really proves Theorem~\ref{th:sfb} below, from which Theorems~\ref{th:false} and \ref{th:true} (as well as Theorems~\ref{th:gen2} and \ref{th:gen3}, for that matter) follow as easy corollaries.

\begin{definition}
For any vector space $V$ over $\rat$, let
$S(V)$ be the statement,

{\it For any $\aleph_0$-coloring of $V$ there exist distinct monochromatic $e_1,e_2,e_3,e_4\in V$ such that $e_1+e_2=e_3+e_4$.}

\end{definition}

\begin{theorem}\label{th:sfb}
If $V$ is a vector space over $\rat$, then $S(V)$ iff $V$ has dimension
at least $\aleph_2$.
\end{theorem}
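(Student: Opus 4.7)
The plan is to adapt the arguments of Theorems~\ref{th:false} and~\ref{th:true} to an abstract $\rat$-vector space $V$, with the dimension of $V$ playing the role that $|\real|$ played in the original proofs. The key preliminary observation is that for a $\rat$-vector space of infinite dimension $\kappa$ we have $|V| = \kappa$, since $\rat$ is countable and each element of $V$ is a finitely-supported $\rat$-combination of basis vectors. Hence $\dim V \ge \aleph_2$ iff $|V| \ge \aleph_2$, and $\dim V \le \aleph_1$ iff $|V| \le \aleph_1$. With this in hand, both directions of the biconditional are translations of the earlier proofs.

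For the $(\Leftarrow)$ direction, assume $\dim V \ge \aleph_2$, so $|V| \ge \aleph_2$ and there is an injection $\alpha \mapsto x_\alpha$ from $\omega_2$ into $V$. I would run the proof of Theorem~\ref{th:true} essentially unchanged: given an $\aleph_0$-coloring $\COL$ of $V$, for each $\beta \in \omega_2$ the map $\alpha \mapsto \COL(x_\alpha + x_\beta)$ sends $\omega_1$ into a countable set, so by pigeonhole three distinct $\alpha_1,\alpha_2,\alpha_3 \in \omega_1$ share a color $i \in \omega$; this yields $F\colon \omega_2 \to \omega_1^3 \times \omega$, which by pigeonhole ($\aleph_2 > \aleph_1$) must collide at two distinct $\beta,\betap$. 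Choosing $\alpha \ne \alphap$ among $\alpha_1,\alpha_2,\alpha_3$ with $x_\alpha - x_\alphap \notin \{x_\beta - x_\betap, x_\betap - x_\beta\}$ gives the parallelogram $e_1 = x_\alpha + x_\beta$, $e_2 = x_\alphap + x_\betap$, $e_3 = x_\alphap + x_\beta$, $e_4 = x_\alpha + x_\betap$. Every step uses only addition in $V$ and pigeonhole on cardinals, so nothing is lost in abstracting from $\real$.

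For the $(\Rightarrow)$ direction, I prove the contrapositive: if $\dim V \le \aleph_1$, I build a bad $\aleph_0$-coloring. If $|V| \le \aleph_0$, assign each element its own color in $\omega$; no two elements are monochromatic, let alone four. Otherwise $|V| = \aleph_1$, and I imitate Theorem~\ref{th:false}. Fix a bijection $\alpha \mapsto x_\alpha$ between $\omega_1$ and $V$, and set $X_\alpha = \{x_\beta : \beta < \alpha\}$. The point is to replace the field closure $\CL(\cdot)$ from the original proof with the $\rat$-linear span $\langle \cdot\rangle_\rat$, which is what actually drove that argument. Define $Y_\alpha = \langle X_\alpha\rangle_\rat$ and $Z_\alpha = Y_\alpha \setminus \bigcup_{\beta<\alpha} Y_\beta$; the $Y_\alpha$ are countable (countable unions of finite $\rat$-combinations of a countable set), the $Z_\alpha$ partition $V$, and each $Z_\alpha$ is countable. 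Color each $Z_\alpha$ injectively using all of $\omega$. If distinct monochromatic $e_1,e_2,e_3,e_4$ satisfied $e_1+e_2=e_3+e_4$, with $e_i \in Z_{\alpha_i}$, then the $\alpha_i$ are pairwise distinct (distinct coloring within a block); taking $\alpha_1 < \alpha_2 < \alpha_3 < \alpha_4$, the identity $e_4 = e_1 + e_2 - e_3$ forces $e_4 \in Y_{\alpha_3}$ because $Y_{\alpha_3}$ is a $\rat$-subspace containing $e_1, e_2, e_3$, contradicting $e_4 \in Z_{\alpha_4}$.

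The only non-mechanical step is the closure swap from $\CL$ to $\langle\cdot\rangle_\rat$, and this is the main obstacle to verify carefully. It works here precisely because the equation $e_1+e_2=e_3+e_4$ has coefficients $\pm 1 \in \rat$, so the operation $(u,v,w) \mapsto u+v-w$ stays inside any $\rat$-subspace; this is exactly why the corresponding generalization in Theorem~\ref{th:gen} needed multiplicative closure (to handle arbitrary real coefficients), but here we can afford the weaker closure that makes sense in an abstract $\rat$-vector space. Theorems~\ref{th:false} and~\ref{th:true} then fall out as the special case $V = \real$, using Fact~\ref{fa:omega} to convert between CH/$\neg$CH and the dimension of $\real$ over $\rat$, which equals $2^{\aleph_0}$.
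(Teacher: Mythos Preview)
Your argument is correct and matches the paper's approach: the paper does not give a separate proof of Theorem~\ref{th:sfb} but simply asserts that the proofs of Theorems~\ref{th:false} and~\ref{th:true} already establish it, and you have carried out exactly that translation. Your explicit replacement of the field closure $\CL(\cdot)$ by the $\rat$-span $\langle\cdot\rangle_\rat$, together with your justification that the coefficients $\pm 1$ keep the crucial step inside any $\rat$-subspace, is in fact more careful than the paper's brief remark that ``all that is used about $\real$ is that it is a vector space over $\rat$.''
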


The proof of Theorem~\ref{th:sfb} is in ZFC.

One can ask the following: Since the result, when abstracted, has nothing to do with $\real$
and is just a statement provable in ZFC,
do we really have a \emph{natural} statement that is independent of ZFC?  We believe so.  

After you know that every finite coloring of $\nat$ has a distinct monochromatic solution to
$e_1+e_2=e_3+e_4$, it is natural to consider the following question:

\bigskip

{\it Does every $\aleph_0$-coloring of $\real$ have a distinct monochromatic solution to
$e_1+e_2=e_3+e_4$?}

\bigskip

This question can be understood by a bright high school or secondary school student with no knowledge of vector spaces.  The fact that \emph{after} you show that this question is independent of ZFC
you \emph{can then} abstract the proof to obtain Theorem~\ref{th:sfb} does not 
diminish the naturalness of the original question.


\section{Acknowledgments}

We would like to thank Jacob Fox for references and for
writing the paper that pointed us to this material.


\begin{thebibliography}{10}

\bibitem{AlexeevTs}
B.~Alexeev and J.~Tsimmerman.
\newblock Equations resolving a conjecture of {R}ado on partition regularity.
\newblock {\em Journal of Combinatorial Theory, Series~A}, 117:1008--1010,
  2010.
\newblock \url{http://www.math.princeton.edu/~jtsimerm}.

\bibitem{Ceder:countable}
J.~Ceder.
\newblock Finite subsets and countable decompositions of {E}uclidean spaces.
\newblock {\em Revue Roumaine de Mathematiques Pures et Appliqu\'{e}es},
  14(9):1246--1251, 1969.

\bibitem{radoch}
R.~O. Davies.
\newblock Partioning the plane into denumerably many sets without repeated
  differences.
\newblock {\em Proceedings of the Cambridge Philosophical Society},
  72:179--183, 1972.

\bibitem{Dicksonlower}
L.~E. Dickson.
\newblock Lower limit for the number of sets of solutions of $x^n+y^n+z^n\equiv
  0 \pmod p$.
\newblock {\em Journal f{\"{u}}r die reine und angewandte Mathematik}, pages
  181--189, 1909.
\newblock \url{http://www.cs.umd.edu/~gasarch/res/}.

\bibitem{foxrado}
J.~Fox.
\newblock An infinite color analogue of {R}ado's theorem.
\newblock {\em Journal of Combinatorial Theory, Series~A}, pages 1456--1469,
  2007.
\newblock \url{http://math.mit.edu/~fox/~publications.html}.

\bibitem{ramseynotes}
W.~Gasarch.
\newblock Ramsey's theorem on graphs, 2005.
\newblock \url{http://www.cs.umd.edu/~gasarch/mathnotes/ramsey.pdf}.

\bibitem{VDWbook}
W.~Gasarch, C.~Kruskal, and A.~Parrish.
\newblock Van der {W}aerden's theorem: Variants and applications.
\newblock \url{www.gasarch.edu/~gasarch/~vdw/vdw.html}.

\bibitem{GRS}
R.~Graham, B.~Rothschild, and J.~Spencer.
\newblock {\em {R}amsey {T}heory}.
\newblock Wiley, 1990.

\bibitem{komjath}
P.~Komj{\'a}th.
\newblock Partitions of vector spaces.
\newblock {\em Periodica Mathematica Hungarica}, 28:187--193, 1994.

\bibitem{RamseyInts}
B.~Landman and A.~Robertson.
\newblock {\em {R}amsey Theory on the integers}.
\newblock AMS, 2004.

\bibitem{radogerman}
R.~Rado.
\newblock Studien zur {K}ombinatorik.
\newblock {\em Mathematische Zeitschrift}, pages 424--480, 1933.
\newblock \url{http://www.cs.umd.edu/~gasarch/vdw/vdw.html}.

\bibitem{radoenglish}
R.~Rado.
\newblock Notes on combinatorial analysis.
\newblock {\em Proceedings of the London Mathematical Society}, pages 122--160,
  1943.
\newblock \url{http://www.cs.umd.edu/~gasarch/vdw/vdw.html}.

\bibitem{vega}
{Ramiro de la Vega}.
\newblock Decomposition of the plane and the size of the continuum.
\newblock {\em Fundamenta Mathematicae}, 203:65--74, 2009.
\newblock This paper is behind a paywall and hence will be lost to future
  generations.

\bibitem{Ramsey}
F.~Ramsey.
\newblock On a problem of formal logic.
\newblock {\em Proceedings of the London Mathematical Society}, 30:264--286,
  1930.
\newblock Series 2. Also in the book {\it Classic Papers in Combinatorics}
  edited by Gessel and Rota. Also
  \url{http://www.cs.umd.edu/~gasarch/ramsey/ramsey.html}.

\bibitem{Schur}
I.~Schur.
\newblock Uber die kongruenz of $x^m+y^m\equiv z^m \pmod p$.
\newblock {\em Jahresbericht der Deutschen Mathematiker-Vereinigung},
  25:114--116, 1916.

\bibitem{VDW}
B.~van~der Waerden.
\newblock Beweis einer {B}audetschen {V}ermutung.
\newblock {\em Nieuw Arch. Wisk.}, 15:212--216, 1927.
\newblock This article is in Dutch and I cannot find it online.

\end{thebibliography}

\end{document}